\newtheorem{thm}{Theorem}
\newtheorem{cor}[thm]{Corollary}
\newtheorem{prop}[thm]{Proposition}
\newtheorem{lemma}[thm]{Lemma}
\newcommand{\cl}{\operatorname{Cl}}
\newcommand{\supp}{\operatorname{supp}}
\newcommand{\si}{\operatorname{Sing}}
\title[Homoclinic orbits and entropy for three-dimensional flows]{Homoclinic orbits and entropy for three-dimensional flows}
\author{A.M. Lopez, R.J. Metzger, C.A. Morales}
\address{Institute of Exact Sciences (ICE), Universidade Federal Rural do Rio de Janeiro, 23890-000 Seropedica, Brazil.}
\email{barragan@im.ufrj.br.}
\address{Instituto de Matem\'atica y Ciencias Afines (IMCA), Universidad Nacional de Ingenier\'{i}a,
Calle Los Bi\'ologos 245, Urb. San C\'esar La Molina
Lima 12, Lima, Peru.}
\email{metzger@imca.edu.pe.}
\address{Instituto de Matem\'atica, Universidade Federal do Rio de Janeiro, P. O.
Box 68530, 21945-970 Rio de Janeiro, Brazil.}
\email{morales@impa.br.}
\keywords{Hyperbolic ergodic measure, Lyapunov exponents, Flow.}
\thanks{Partially supported by MATHAMSUB 15 MATH05-ERGOPTIM, Ergodic Optimization of Lyapunov Exponents.}
\subjclass[2010]{Primary 37D25; Secondary 37C40}
\begin{document}

\begin{abstract}
We prove that every $C^1$ three-dimensional flow with positive topological entropy can be
$C^1$ approximated by flows with homoclinic orbits.
This extends a previous result for $C^1$ surface diffeomorphisms \cite{g}.
\end{abstract}

\maketitle

\section{Introduction}
\noindent
In his classical paper \cite{k} Katok proved
that every $C^{1+\alpha}$ surface diffeomorphism with positive topological entropy
has a homoclinic orbit.
In \cite{g} Gan asked if this result is true for $C^1$ surface diffeomorphisms too.
He didn't answer this question but managed to prove that
every $C^1$ surface diffeomorphism with positive entropy can be
$C^1$ approximated by diffeomorphisms with homoclinic orbits.
More recently, the authors \cite{gy} proved that
every three-dimensional flow can be $C^1$ approximated by Morse-Smale flows or by flows with
a homoclinic orbit (this entails the weak Palis conjecture for three-dimensional flows).
From this they deduced that
there is an open and dense subset of three-dimensional flows
where the property of having zero topological entropy is invariant under topological equivalence.
Moreover, the $C^1$ approximation by three-dimensional flows with robustly zero topological entropy is equivalent
to the $C^1$ approximation by Morse-Smale ones.

In this paper we will extend \cite{g} from surface diffeomorphisms to three-dimensional flows.
In other words, we will prove that every $C^1$ three-dimensional flow with positive topological entropy
can be $C^1$ approximated by flows with homoclinic orbits.
Let us state our result in a precise way.

The term {\em flow} will be referred to $C^1$ vector fields $X$ defined on a compact
connected boundaryless Riemannian manifold $M$. To emphasize its differentiability we say that $X$ is a $C^r$ flow, $r\in\mathbb{N}^+$.
When $dim(M)=3$ we say that $X$ is a {\em three-dimensional flow}.
The flow of $X$ will be denoted by $\phi_t$ (or $\phi^X_t$ to emphasize $X$), $t\in\mathbb{R}$.
We denote by $\Phi_t=\Phi_t$ the derivative of $\phi_t$.
The space of $C^r$ flows $\mathcal{X}^r$ is endowed with the standard $C^r$ topology.
We say that $x\in M$ is a {\em periodic point} of a flow $X$ if there is a minimal positive number $\pi(x)$ (called {\em period})
such that $\phi_{\pi(x)}(x)=x$.
Notice that $1$ is always an eigenvalue of the
derivative $DX_{\pi(x)}(x)$ with eigenvector $X(x)$.
The remainders eigenvalues will be referred to as the eigenvalues of $x$.
We say that the orbit $O(x)=\{X_t(x):t\in\mathbb{R}\}$ of a periodic point $x$ (or the periodic point $x$)
is {\em hyperbolic} if it has no eigenvalue of modulus $1$.
In case there are eigenvalues of modulus less and bigger than $1$
we say that the hyperbolic periodic point is a {\em saddle}. 

The Invariant Manifold Theory \cite{hps} asserts that through any periodic saddle $x$ it passes a pair of invariant manifolds,
the so-called strong stable and unstable manifolds
$W^{ss}(x)$ and $W^{uu}(x)$, tangent at $x$ to the eigenspaces corresponding to the eigenvalue
of modulus less and bigger than $1$ respectively.
Saturating them with the flow we obtain the stable and unstable manifolds $W^s(x)$ and $W^u(x)$ respectively.
We say that $O$ is a {\em homoclinic orbit} (associated to a periodic saddle $x$)
if $O\subset W^s(x)\cap W^u(x)\setminus O(x)$. If, additionally, $\dim(T_qW^s(x)\cap T_qW^u(x))\neq1$
then we say that $O$ is a {\em homoclinic tangency}.

We say that $E\subset X$ is {\em $(T,\epsilon)$-separated} for some $T,\epsilon>0$
if for any distinct point $x,y\in E$ there exists $0\leq t\leq T$ such that
$d(X_t(x),X_t(y))>\epsilon$.
The number
$$
h(X)=\lim_{\epsilon\to0}\limsup_{T\to\infty}\frac{1}T\log \sup\{Car(E):E \mbox{ is } (T,\epsilon)\mbox{-separated}\}
$$
is the so-called {\em topological entropy} of $X$.
With these definitions we can state our result.

\begin{thm}
\label{thAA}
Every $C^1$ three-dimensional flow with positive topological entropy can be $C^1$ approximated by
flows with homoclinic orbits.
\end{thm}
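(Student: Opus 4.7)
The plan is to mimic Gan's scheme \cite{g} for $C^1$ surface diffeomorphisms, replacing its two main tools (Ma\~n\'e's ergodic closing lemma and Hayashi's $C^1$ connecting lemma for diffeomorphisms) by their three-dimensional flow counterparts. The route will have three stages: produce a hyperbolic ergodic measure with positive entropy, close a typical recurrence to a hyperbolic periodic saddle of a nearby flow, and then perturb once more to glue the invariant manifolds of this saddle into a homoclinic orbit.

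First I would use the variational principle, applied to the time-one map $\phi_1$, to extract from $h(X)>0$ an ergodic $\phi_1$-invariant probability measure $\mu$ with $h_\mu(\phi_1)>0$. Since $dim(M)=3$ and the flow direction always carries a zero Lyapunov exponent, Ruelle's inequality forces the remaining two exponents of $\mu$ to be non-zero and of opposite signs, the positive one bounded below by $h_\mu(\phi_1)$. Hence $\mu$ is a hyperbolic ergodic measure with one-dimensional stable and one-dimensional unstable Oseledets directions transverse to $X$.

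Next I would invoke an ergodic closing lemma for flows, as developed in the authors' earlier work \cite{gy}, to obtain a $C^1$-small perturbation $Y$ of $X$ carrying a hyperbolic periodic saddle $p$ whose derivative cocycle along $O(p)$ approximates the Oseledets splitting of $\mu$. In the three-dimensional flow setting $p$ then has one-dimensional strong manifolds $W^{ss}(p)$ and $W^{uu}(p)$ and, after saturation by the flow, two-dimensional manifolds $W^s(p)$ and $W^u(p)$; this is exactly the dimensional configuration permitting a transverse one-dimensional homoclinic intersection.

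The main obstacle is upgrading this hyperbolic saddle into an actual homoclinic orbit on a $C^1$-nearby flow. For this I would exploit that Pesin-block returns of $\mu$-typical points are dense in the block, which yields regular points close to $O(p)$ whose forward $Y$-orbit approaches $W^u(O(p))$ while their backward $Y$-orbit approaches $W^s(O(p))$, so that these saturated manifolds come arbitrarily close at some regular point $q\notin O(p)$ away from the singular set of $Y$. A final application of Hayashi's $C^1$ connecting lemma for three-dimensional flows, localized in a flowbox around $q$, then produces a further $C^1$-small perturbation $Z$ joining a branch of $W^u(p_Z)$ to a branch of $W^s(p_Z)$ across this flowbox; this gives the desired homoclinic orbit associated with the continuation of $p$. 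The hardest point is the verification of the connecting lemma's hypotheses: one must control the successive perturbations so that the continuation of $p$ remains a hyperbolic saddle and its invariant manifolds retain their near-intersection throughout, while the only recurrence that is available is furnished by the original measure $\mu$ rather than by the perturbed flow $Y$ itself. This is precisely the point at which the three-dimensional flow setting demands real modifications of Gan's surface-diffeomorphism argument.
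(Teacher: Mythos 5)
Your first stage (variational principle plus Ruelle's inequality applied to the flow and its reversal, giving a hyperbolic ergodic measure of saddle type) matches the paper. The rest of your route, however, has a genuine gap that your own last sentence already points at but does not resolve. After the ergodic closing lemma replaces $X$ by a nearby flow $Y$ with a hyperbolic saddle $p$, the measure $\mu$ is no longer invariant for $Y$, so ``$\mu$-typical recurrence'' gives no information about $Y$-orbits; and, more fundamentally, in the $C^1$ category there is no Pesin theory: $\mu$-almost every point has no stable/unstable manifold and there are no Pesin blocks with uniform-size manifolds, so the claim that regular points near $O(p)$ have forward orbits approaching $W^u(O(p))$ and backward orbits approaching $W^s(O(p))$ has no justification. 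Consequently the hypotheses of the connecting lemma (an orbit segment of the \emph{same} flow passing near a point of $W^u(p)$ and later near a point of $W^s(p)$) cannot be verified from what you have, and no mechanism is proposed to supply them. This is precisely the obstruction that separates the $C^1$ statement from Katok's $C^{1+\alpha}$ theorem, so it cannot be waved away as a technical verification.

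The paper takes a different and cleaner route that avoids both problems. It argues by contradiction: if $X$ cannot be approximated by flows with homoclinic orbits, then in particular it is far from homoclinic tangencies, and the Gan--Yang results (Lemma 2.9 and Corollary 2.10 of \cite{gy}) combined with the ergodic closing lemma force the periodic orbits produced nearby to have index $1$ and yield a dominated splitting of index $1$ for the linear Poincar\'e flow over $\supp(\mu)^*$. By uniqueness of such splittings this coincides with the Oseledets splitting of $\mu$, so the Oseledets decomposition is dominated; a flow version of Crovisier's Proposition 1.4 (proved with Liao's shadowing lemma and the scaled linear Poincar\'e flow, which here play the role that Pesin theory plays in the $C^{1+\alpha}$ setting) then shows that $\mu$ is supported on a homoclinic class of $X$ itself. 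Positive entropy makes that class nontrivial, so $X$ already has a homoclinic orbit, contradicting the assumption. In short: domination obtained from being far from tangencies is the $C^1$ substitute for the Pesin-block/connecting-lemma step you propose, and without some such substitute your third stage does not go through.
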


The proof follows Gan's arguments \cite{g} using
the variational principle (e.g. \cite{bru}) and Ruelle's inequality.
But we simplify such arguments by using recent tools as a flow-version of a result by Crovisier \cite{c} and Gan-Yang \cite{gy}.

Denote by $\cl(\cdot)$ and $int(\cdot)$ the closure and interior operations.
As in \cite{g} we get from Theorem \ref{thAA} the following corollary.

\begin{cor}
\label{c1}
If $\mathcal{H}_+=\{X\in \mathcal{X}^1:h(X)>0\}$, then $\cl(int(H_+))=H_+$
and so $\mathcal{H}_+$ has no isolated points.
\end{cor}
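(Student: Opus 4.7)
The plan is to deduce Corollary \ref{c1} directly from Theorem \ref{thAA} by combining the $C^1$-approximation by flows with homoclinic orbits with the Smale--Birkhoff theorem for flows and the robustness of hyperbolic basic sets under $C^1$ perturbations. The substantive content is the density of $\inte(\mathcal{H}_+)$ in $\mathcal{H}_+$, that is, $\mathcal{H}_+\subseteq \cl(\inte(\mathcal{H}_+))$; combined with the trivial inclusion $\inte(\mathcal{H}_+)\subseteq \mathcal{H}_+$ this delivers the equality, and the ``no isolated points'' statement then follows at once, since every $X\in\mathcal{H}_+$ is the $C^1$-limit of distinct elements of $\inte(\mathcal{H}_+)\subseteq\mathcal{H}_+$.

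Given $X\in\mathcal{H}_+$, Theorem \ref{thAA} furnishes a sequence $X_n\to X$ in $\mathcal{X}^1$ such that each $X_n$ admits a homoclinic orbit $O_n$ associated to some periodic saddle $p_n$. The intersection of $W^s(p_n)$ and $W^u(p_n)$ along $O_n$ may be tangential, but a $C^1$-small, localized perturbation of $X_n$ inside a flow-box through a chosen point $q_n\in O_n$---slightly displacing one of the two $2$-dimensional invariant surfaces---produces a flow $Y_n$ still $C^1$-close to $X$ for which the continuation of $p_n$ admits a transverse homoclinic orbit. This transversalization is the flow counterpart of the classical Kupka--Smale adjustment.

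A transverse homoclinic orbit of $Y_n$ generates, via the Smale--Birkhoff theorem for flows, a hyperbolic basic set $\Lambda_n$ conjugate to the suspension of a nontrivial shift, so $h(Y_n)\geq h_{\mathrm{top}}(Y_n|_{\Lambda_n})>0$. Because hyperbolic basic sets persist under $C^1$ perturbations together with their topological type and entropy, there is a $C^1$-open neighborhood $\mathcal{V}_n$ of $Y_n$ contained in $\mathcal{H}_+$; equivalently $Y_n\in\inte(\mathcal{H}_+)$. Since $Y_n\to X$ we obtain $X\in\cl(\inte(\mathcal{H}_+))$, which proves the density and hence the corollary.

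The main technical point requiring care is the transversalization step: one must perturb a vector field rather than a diffeomorphism, and the modification must open a possibly tangential crossing of two $2$-dimensional invariant surfaces inside the ambient $3$-manifold while leaving $p_n$ and its local hyperbolic structure intact and preserving $C^1$-proximity to $X$. This is achieved by a cut-off perturbation supported in a small flow-box around $q_n$ disjoint from the orbit of $p_n$, a routine but slightly delicate construction in the $C^1$ topology.
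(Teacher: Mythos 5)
Your density argument is essentially the intended one: this corollary is quoted from Gan's scheme in \cite{g}, where one approximates $X$ by flows with homoclinic orbits (Theorem \ref{thAA}), renders the homoclinic connection transverse by a $C^1$-small local perturbation, invokes the Birkhoff--Smale theorem to produce a suspended horseshoe, and uses the robustness of hyperbolic basic sets to conclude that the perturbed flows lie in $\inte(\mathcal{H}_+)$; hence $\mathcal{H}_+\subseteq\cl(\inte(\mathcal{H}_+))$ and $\mathcal{H}_+$ has no isolated points. (A small simplification: the proof of Theorem \ref{thAA} actually places the hyperbolic measure in a \emph{nontrivial homoclinic class}, whose defining intersections are transverse by definition, so the transversalization step you worry about can be bypassed or at least borrowed from that argument rather than redone by hand.)

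There is, however, a genuine logical gap in how you obtain the stated equality. You claim that $\mathcal{H}_+\subseteq\cl(\inte(\mathcal{H}_+))$ ``combined with the trivial inclusion $\inte(\mathcal{H}_+)\subseteq\mathcal{H}_+$ delivers the equality.'' It does not: from $\inte(\mathcal{H}_+)\subseteq\mathcal{H}_+$ one only gets $\cl(\inte(\mathcal{H}_+))\subseteq\cl(\mathcal{H}_+)$, and to pass to $\cl(\inte(\mathcal{H}_+))\subseteq\mathcal{H}_+$ you would need $\mathcal{H}_+$ to be $C^1$-closed. Nothing gives this: topological entropy is not lower semicontinuous in the $C^1$ topology, and a flow lying on the boundary of $\inte(\mathcal{H}_+)$ (for instance one exhibiting only a tangential homoclinic connection, which is accumulated by flows with transverse homoclinic orbits and hence by flows with robust horseshoes) may well have zero entropy. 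So what your construction actually proves is $\mathcal{H}_+\subseteq\cl(\inte(\mathcal{H}_+))$, equivalently $\cl(\inte(\mathcal{H}_+))=\cl(\mathcal{H}_+)$, together with the no-isolated-points conclusion -- which is the substantive content of the corollary as derived in \cite{g} -- but it does not justify the literal identity $\cl(\inte(\mathcal{H}_+))=\mathcal{H}_+$. To assert that identity you would have to show that every $C^1$ limit of flows with robustly positive entropy still has positive entropy, and no argument of that kind is available from Theorem \ref{thAA}; otherwise the conclusion should be stated in the closure (or relative-density) form.
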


\section{Proof of Theorem \ref{thAA}}
\label{sec1}

\noindent
Denote by $\si(X)$ the set of singularities of a flow $X$.
Given $\Lambda\subset M$, we denote by $\Lambda^*=\Lambda\setminus\si(X)$ the set of regular points
oif a flow $X$ in $\Lambda$.
Define by $E^X$ the map assigning to $p\in M$ the subspace of $T_pM$ generated by $X(p)$.
It turns out to be a one-dimensional subbundle of $TM$ when restricted to $M^*$.
Define also the normal subbundle $N$ over $M^*$ whose fiber
$N_p$ at $p\in M^*$ is the orthogonal complement of $E^X_p$ in $T_pM$.
Denoting by $\pi=\pi_p:T_pM\to N_p$ the orthogonal projection we obtain the {\em linear Poincar\'e flow}
$\psi_t:N\to N$ defined by $\psi_t(p)=\pi_{\phi_t(p)}\circ \Phi_t(p)$.
When necessary we will use the notation
$N^X$ and $\psi^X_t$ to indicate the dependence on $X$.

For a (nonnecessarily compact) invariant set $\Omega\subset M^*$, one says that {\em $\Omega$
has a dominated splitting with respect to the Poincar\'e flow} if there are a continuous splitting
$N_\Omega=N^-\oplus N^+$ into $\psi_t$-invariant subbundles $N^-,N^+$ and positive numbers $K,\lambda$ such that
$$
\|\psi_t|_{N^-_x}\|\cdot\|\psi_{-t}|_{N^+_{\phi_t(x)}}\|\leq Ke^{-\lambda t},
\quad\quad\forall x\in \Omega, t\geq0.
$$

Let $\mu$ be a Borel probability measure of $M$.
We say that $\mu$ is {\em nonatomic} if it has no points with positive mass.
We say that $\mu$ is supported on $H\subset M$ if $\supp(\mu)\subset H$, where $\supp(\mu)$
denotes the support of $\mu$.
We say that $\mu$ is
{\em invariant} if
$\mu(X_t(A))=\mu(A)$ for every borelian $A$ and every $t\in\mathbb{R}$.
Moreover, $\mu$ is {\em ergodic} if it is invariant and
every measurable invariant set has measure $0$ or $1$.

Oseledets's Theorem \cite{s} ensures that
every ergodic measure $\mu$ is equipped with
an invariant set of full measure $R$, a positive integer $k$, real numbers
$\chi_1<\chi_2<\cdots<\chi_{k}$ and a measurable invariant splitting $T_RM=
E^1\oplus \cdots \oplus E^k$ over $R$
such that
$$
\displaystyle\lim_{t\to\pm\infty}\frac{1}{t}\log\|\Phi_t(x) e^i\|=\chi_i,
\quad\quad\forall x\in R, \forall e^i\in E^i_x\setminus\{0\}, \forall 1\leq i\leq k.
$$
The numbers $\chi_1,\cdots, \chi_k$ are the so-called {\em Lyapunov exponents} of $\mu$.
Clearly, the Lyapunov exponent of $\mu$ corresponding to the flow
direction is zero. If the remainders exponents are nonzero, then we say that $\mu$ is a {\em hyperbolic measure}.
In such a case we can rewrite the Oseledets decomposition of $\mu$
as $T_RM=E^s\oplus E^X\oplus E^u$ where $E^s$ (resp. $E^u$) is the sum of the subbundles
$\{E_1,\cdots, E^k\}$ for which the corresponding Lyapunov exponent is less than (resp. bigger than) $1$.
We then say that the {\em Oseledets decomposition of $\mu$ is dominated with respect to the Poincar\'e flow}
if $\supp(\mu)^*\neq\emptyset$ (equivalently $\mu(\si(X))=0$)
and the decomposition $N_R=N^s\oplus N^u$
given by $N^*=\pi(E^*)$ for $*=s,u$ is dominated with respect to the Poincar\'e flow.

We shall use the following lemma.

\begin{lemma}
\label{cro1}
Let $\mu$ be a hyperbolic ergodic measure of a flow $X$
whose Oseledets decomposition is dominated with respect to the Poincar\'e flow.
Then, there are $\eta,T>0$ such that $\mu$ is ergodic for $\phi^X_T$,
$$
\int\log\|\psi_T|_{N^s}\|d\mu\leq-\eta\quad \quad \mbox{ and }\quad \quad \int\log\|\psi_{-T}|_{N^u}\|d\mu\leq-\eta.
$$
\end{lemma}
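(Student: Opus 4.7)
The plan is to combine the hyperbolicity of $\mu$ -- transferred from $\Phi_t$ to the linear Poincar\'e flow $\psi_t$ -- with a sub-additive ergodic theorem applied to the cocycles $\log\|\psi_t|_{N^s}\|$ and $\log\|\psi_{-t}|_{N^u}\|$, and then to choose $T$ large while avoiding the countable set of ``bad'' times for which $\mu$ fails to be $\phi^X_T$-ergodic.

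First I would transfer hyperbolicity to $\psi_t$. Since $\mu(\si(X))=0$, $\mu$-a.e.\ orbit is regular and the Oseledets splitting $T_RM=E^s\oplus E^X\oplus E^u$ is defined. A standard Pesin-theoretic observation (angles between distinct Oseledets subspaces decay at most sub-exponentially along the orbit, and the flow direction $E^X$ has exponent $0$) implies that for every non-zero $v\in E^s_x$ the projection $\pi\Phi_t v\in N^s_{\phi_t(x)}$ has the same exponential growth rate as $\Phi_t v$, and analogously for $v\in E^u_x$. Hence the Lyapunov spectrum of $\psi_t|_{N^s}$ consists of the strictly negative Lyapunov exponents of $\mu$, and that of $\psi_t|_{N^u}$ of the strictly positive ones. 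Denote by $\chi^s_{\max}<0$ and $\chi^u_{\min}>0$ the extremal exponents relevant below.

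Next, both $t\mapsto\log\|\psi_t|_{N^s_x}\|$ (over $\phi^X_t$) and $t\mapsto\log\|\psi_{-t}|_{N^u_x}\|$ (over $\phi^X_{-t}$) are sub-additive cocycles whose positive parts are bounded, because $M$ is compact and $\Phi_t$ has bounded operator norm for each fixed $t$. Kingman's sub-additive ergodic theorem, applied to the ergodic measure $\mu$, then yields
\[
\lim_{t\to+\infty}\frac1t\int\log\|\psi_t|_{N^s}\|\,d\mu=\chi^s_{\max}<0,\qquad
\lim_{t\to+\infty}\frac1t\int\log\|\psi_{-t}|_{N^u}\|\,d\mu=-\chi^u_{\min}<0,
\]
the a.s.\ limits coinciding with the top Lyapunov exponent of the respective cocycles. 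Fix any $T_0$ so large that these averages are bounded above by $\chi^s_{\max}/2$ and $-\chi^u_{\min}/2$, respectively; then for every $T\ge T_0$ one has $\int\log\|\psi_T|_{N^s}\|\,d\mu\le T\chi^s_{\max}/2$ and $\int\log\|\psi_{-T}|_{N^u}\|\,d\mu\le -T\chi^u_{\min}/2$, so setting $\eta:=\min\{-T\chi^s_{\max}/2,\ T\chi^u_{\min}/2\}>0$ produces the two inequalities in the statement.

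Finally, to arrange $\phi^X_T$-ergodicity one uses that the set of $T>0$ for which $\phi^X_T$ fails to be $\mu$-ergodic is countable (a standard consequence of the fact that the point spectrum of the $L^2(\mu)$ Koopman group of $\phi^X_t$ is countable: the bad $T$'s are those resonating with some non-zero frequency $\alpha$ via $T\alpha\in 2\pi\Z$); pick $T\ge T_0$ outside this countable set. The main obstacle, I expect, is the identification of the Lyapunov spectrum of $\psi_t$ from that of $\Phi_t$ -- this is where $\mu(\si(X))=0$ and the sub-exponential angle estimate really enter, and it is the technical core of the lemma. Once that is in hand, Kingman plus the countable-bad-times remark finish the argument.
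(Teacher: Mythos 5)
Your proposal is correct and follows essentially the same route as the paper: transfer the nonzero Lyapunov exponents of $\mu$ to the linear Poincar\'e flow (using $\mu(\si(X))=0$ and the subexponential angle estimates), obtain the negativity of the integrals from a subadditive/Furstenberg--Kesten type ergodic theorem, and choose $T$ using the fact that only countably many times destroy ergodicity (the paper invokes Pugh--Shub to get total ergodicity of some $\phi_{T_1}$ and takes $T=nT_1$). Your use of Kingman's theorem, which gives convergence of the integrals directly, is if anything a slightly cleaner substitute for the paper's Furstenberg--Kesten plus Dominated Convergence step.
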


\begin{proof}
It follows from the hypothesis that $\mu(\si(X))=0$.
On the other hand, $\mu$ is ergodic for $X$ so there is $T_1>0$ such that $\mu$
is totally ergodic for $\phi_{T_1}$ (c.f. \cite{ps}).
Since $\mu$ is hyperbolic, there is $\eta_0>0$ such that any
Lyapunov exponent off the flow direction belongs to $\mathbb{R}\setminus [-\eta_0,\eta_0]$.
From this and the
Furstenberg-Kesten Theorem (see also p. 150 in \cite{w})
we obtain
$$
\lim_{n\to\infty}\frac{1}n\log\|\psi_{nT_1}|_{N^s_x}\|\leq -\eta_0\mbox{ and }
\lim_{n\to\infty}\frac{1}n\log\|\psi_{-nT_1}|_{N^s_{\phi_{nT_1}(x)}}\|\leq -\eta_0,
$$
for $\mu$-a.e. $x\in M$.
Hence
$$
\lim_{n\to\infty}\frac{1}n \int\log\| \psi_{nT_1}|_{N^s}\|d\mu\leq-\frac{\eta_0}2\quad\mbox{ and }\quad
\lim_{n\to\infty}\frac{1}n \int\log\|\psi_{-nT_1}|_{N^u}\|d\mu\leq -\frac{\eta_0}2
$$
by the Dominated Convergence Theorem.
Now take $T=nT_1$ and $\eta=n\frac{\eta_0}2$ with $n$ large.
\end{proof}

Denote by $\cl(\cdot)$ the closure operation.
We say that $H\subset M$ is a {\em homoclinic class} of $X$ if there is a periodic saddle $x$
such that
$$
H=\cl(\{q\in W^s(x)\cap W^u(x): \dim(T_qW^s(x)\cap T_qW^u(x))=1\}).
$$
A homoclinic class is {\em nontrivial} if it does not reduce to a single periodic orbit.

The following is the flow-version of Proposition 1.4 in \cite{c}.

\begin{prop}
\label{thA}
For every flow,
every hyperbolic ergodic measure whose
Oseledets decomposition is dominated with respect to the Poincar\'e flow is supported on a homoclinic class.
\end{prop}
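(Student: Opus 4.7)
The plan is to reduce Proposition \ref{thA} to Crovisier's Proposition 1.4 in \cite{c} for diffeomorphisms by passing to a Poincar\'e cross-section. The hypothesis that the Oseledets decomposition of $\mu$ is dominated \emph{with respect to the Poincar\'e flow} $\psi_t$ rather than with respect to $\Phi_t$ is precisely what suggests this reduction: the linear Poincar\'e flow is the linearization of a first-return map along a transverse section, so a dominated splitting for $\psi_t$ should translate into a dominated splitting for the derivative of such a return map. Working with the time-$T$ map $\phi_T$ directly would be less convenient, because the flow direction $E^X$ sits as a zero-exponent center between $E^s$ and $E^u$ in the full Oseledets decomposition of $\mu$ for $\Phi_T$, and Crovisier's statement is formulated for hyperbolic decompositions.

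First I would apply Lemma \ref{cro1} to obtain $T,\eta>0$ with $\mu$ ergodic for $\phi_T$ and with the $N^s$, $N^u$ log-norm integrals bounded above by $-\eta$. Since $\mu(\si(X))=0$, there exists a regular point $x_0\in\supp(\mu)$ on which I would build a small codimension-one section $\Sigma$ transverse to $X$. Standard arguments furnish a (partially defined) $C^1$ return map $P\colon\Sigma\to\Sigma$ with return time $\tau$, and the natural identification $T_y\Sigma\cong N_y$ intertwines $DP_y$ with $\psi_{\tau(y)}(y)$. Consequently the dominated splitting $N^s\oplus N^u$ restricts over $\Sigma\cap\supp(\mu)$ to a dominated splitting for $DP$.

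Next, the Ambrose-Kakutani correspondence produces a $P$-invariant Borel probability $\mu_\Sigma$ on $\Sigma$; ergodicity of $\mu$ for $\phi_T$ together with Lemma \ref{cro1} forces $\mu_\Sigma$ to be ergodic and hyperbolic for $P$, with Oseledets splitting equal to $(N^s\oplus N^u)|_\Sigma$ and hence dominated under $DP$. Applying Crovisier's Proposition 1.4 in \cite{c} to $(P,\mu_\Sigma)$ yields a $P$-periodic saddle $p\in\Sigma$ whose $P$-homoclinic class $H_P(p)$ contains $\supp(\mu_\Sigma)$. Transverse intersections of $W^s_P(p)$ and $W^u_P(p)$ inside $\Sigma$ then produce, after flowing, one-dimensional transverse intersections of $W^s_X(O(p))$ and $W^u_X(O(p))$; saturating $H_P(p)$ by $X$ gives an $X$-homoclinic class of $O(p)$ whose closure contains $\supp(\mu)$, which is the desired conclusion.

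The main obstacle I expect is the technical bookkeeping in the cross-section construction: verifying that $\mu_\Sigma$ is well-defined and ergodic when return times are not uniformly bounded (Kac-type estimates are needed to control the average return time and relate the exponents of $\mu_\Sigma$ to those of $\mu$, via the expressions furnished by Lemma \ref{cro1}), and checking that the correspondence between $P$-homoclinic classes on $\Sigma$ and $X$-homoclinic classes on $M$ respects the one-dimensional transverse intersection condition required in the definition of homoclinic class. Once these ingredients are in place, the reduction to Crovisier's diffeomorphism result is essentially formal.
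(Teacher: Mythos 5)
There is a genuine gap: your reduction treats Crovisier's Proposition 1.4 in \cite{c} as a black box applicable to the first-return map $P$ of a small local cross-section $\Sigma$, but that proposition is stated (and proved) for $C^1$ diffeomorphisms of a compact manifold, and $P$ is nothing of the sort. The return map is only partially defined on a non-compact subset of $\Sigma$ of full $\mu_\Sigma$-measure, and, crucially, the hypothesis only gives $\mu(\si(X))=0$, not $\supp(\mu)\cap\si(X)=\emptyset$; the support may well contain singularities (the Lorenz attractor is the model case). Orbits passing near a singularity have unbounded return times and unbounded $\|DP\|$, so every uniform ingredient in Crovisier's argument (uniform continuity of the derivative, Pliss-type selection with uniform constants, a closing/shadowing step on a compact manifold) fails for $P$. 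What you call ``technical bookkeeping'' --- well-definedness of $\mu_\Sigma$, Kac/Abramov control of return times, transfer of domination and of homoclinic classes back to the flow --- is in fact the entire difficulty of the flow case, and it cannot be discharged by the Ambrose--Kakutani correspondence alone: that correspondence is measure-theoretic, while the conclusion you need (existence of a periodic saddle whose invariant manifolds intersect, with $\supp(\mu)$ in its homoclinic class) is a $C^1$ closing-type statement requiring uniform estimates along the orbits used.

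This is precisely why the paper does not pass to a section. It stays in the flow category: after Lemma \ref{cro1} it transfers the integral estimates to the \emph{scaled} linear Poincar\'e flow $\psi^*_t$ (using $\int\log\|\Phi_T|_{E^X}\|d\mu=0$), extends the dominated splitting from $\supp(\mu)^*$ to a neighborhood via Lemma 2.29 in \cite{ap}, and then runs the arguments of p.~214 of \cite{sgw}: a Pliss-type selection of quasi-hyperbolic arcs for $\psi^*$, Liao's shadowing lemma for flows (Theorem 5.2 in \cite{sgw}) to close these arcs into periodic orbits accumulating on $\supp(\mu)$, and the uniform size of invariant manifolds (Theorem 5.4 in \cite{sgw}) to place all these periodic orbits in one homoclinic class containing $\supp(\mu)$. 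The rescaling and Liao's flow-shadowing are exactly the tools that replace the compactness and uniformity your section-based reduction would need near singularities; without them (or an argument restricted to the nonsingular case plus a separate treatment of singular supports), your proof does not go through.
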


\begin{proof}
Let $\mu$ be a hyperbolic ergodic measure of a flow $X$.
Suppose that the Oseledets decomposition of $\mu$ is dominated with respect to the linear Poincar\'e flow.
By Lemma \ref{cro1} there are $\eta,T>0$ such that $\mu$ is ergodic for $\phi^X_T$,
$$
\int\log\|\psi_T|_{N^s}\|d\mu\leq-\eta\quad \quad \mbox{ and }\quad \quad \int\log\|\psi_{-T}|_{N^u}\|d\mu\leq-\eta.
$$
It follows from the hypothesis that $\mu(\si(X))=0$. Since $\mu$ is ergodic, we obtain
$$
\int\log\|\Phi_T|_{E^X}\|d\mu=0.
$$
Replacing in the two previous inequalities we obtain
$$
\int\log\|\psi^*_T|_{N^s}\|d\mu\leq-\eta\quad \quad \mbox{ and }\quad \quad \int\log\|\psi^*_{-T}|_{N^u}\|d\mu\leq-\eta,
$$
where
$$
\psi^*_t=\frac{\psi_t}{\|\Phi_t(x)|_{E^X_x}\|},
\quad\quad x\in M^*, t\in \mathbb{R}
$$
is the scaled linear Poincar\'e flow (c.f. \cite{sgw}).

On the other hand, standard arguments (c.f. \cite{lgw}) imply that the decomposition $N_R=N^s\oplus N^u$
(which is dominated for the Poincar\'e flow by hypothesis) extends continuously to a dominated splitting $N_{\supp(\mu)^*}=N^s\oplus N^u$
with respect to the linear Poincar\'e flow.
By Lemma 2.29 in \cite{ap}
there are a neighborhood $U$ of
$\supp(\mu)$ and a splitting $N_{\Lambda^*}=N^s\oplus N^u$
extending $N_{\supp(\mu)^*}=N^s\oplus N^u$
where $\Lambda=\bigcap_{t\in\mathbb{R}}X_t(U)$.

From this point forward we can reproduce the arguments on p. 214 of \cite{sgw} to conclude the proof.
\end{proof}

\begin{proof}[Proof of Theorem \ref{thAA}]
Let $X$ be a three-dimensional flow with positive topological entropy.
By the variational principle (e.g. \cite{bru})
there is an invariant measure $\mu$ of $X$ such that $h_\mu(X_1)>0$,
where $h_\mu$ is the metric entropy operation. By the ergodic decomposition theorem we can assume that $\mu$ is ergodic.

By Ruelle's inequality (e.g. Theorem 5.1 in \cite{g}) we get that $\mu$ has at least one positive Lyapunov exponent.
By applying this inequality to the reversed flow we obtain that $\mu$ has also a negative exponent.
Since $dim(M)=3$, we conclude that $\mu$ is hyperbolic of {\em saddle-type}
(i.e. with positive and negative exponents).

By the Ergodic Closing Lemma for flows (c.f. Theorem 5.5 in \cite{sgw})
there are a sequence of flows $X^n$ and a sequence of hyperbolic periodic orbits $\gamma_n$ of $X_n$
such that $X_n\to X$ and $\gamma_n\to \supp(\mu)$ as $n\to \infty$ where the latter convergence is with respect to the Hausdorff
topology of compact subsets of $M$.
By passing to a subsequence if necessary we can assume that the index (stable manifold dimension) of these periodic orbits is constant ($i$ say).

Now we assume by contradiction that $X$ cannot be approximated by flows with homoclinic orbits.
Hence $X$ cannot be approximated by flows with homoclinic tangencies either.

Since $dim(M)=3$, $i$ can take the values $0,1,2$ only.
If $i=2$ then each $\gamma_n$ is an attracting periodic orbit of $X^n$.
Since $X$ cannot be approximated by flows with homoclinic tangencies, Lemma 2.9 in \cite{gy}
implies that there is $T>0$ such that
$\|\psi^{X^n}_T|_{N^{X^n}_x}\|\leq \frac{1}2$ for all $n\in\mathbb{N}$ and all $x\in \gamma_n$.
Letting $n\to\infty$ we get $\|\psi^X_T|_{N_x}\|\leq\frac{1}2$ for all $x\in \supp(\mu)$.
This would imply that the Lyapunov exponents of $\mu$ off the flow direction are all negative.
Since $\mu$ is saddle-type, we obtain a contradiction proving $i\neq 2$. Similarly, $i\neq0$ and so $i=1$.
This allows us to apply Corollary 2.10 in \cite{gy} to obtain a dominated splitting $N_{\supp(\mu)^*}=N^-\oplus N^+$ of index $1$ (i.e.
$dim(N^-)=1$) with respect to the Poincar\'e flow.

Next we observe that both the Oseledets splitting $N^s\oplus N^u$ for the linear Poincar\'e flow
and the splitting $N^-\oplus N^+$ obtained above are pre-dominated of index $1$ in the sense
of Definition 2.1 in \cite{lgw}.
Since 
pre-dominated splittings of prescribed index are unique (c.f. Lemma 2.3 in \cite{lgw}),
we get $N^s\oplus N^u=N^-\oplus N^+$.

Since $N^-\oplus N^+$ is dominated with respect to the Poincar\'e flow,
the Oseledets decomposition $N^s\oplus N^u$ of $\mu$ is dominated with respect to the linear Poincar\'e flow either.
We conclude that $\mu$ is supported on a homoclinic class
by Proposition \ref{thA}.

Since $\mu$ has positive metric entropy, such a homoclinic class is nontrivial and so
$X$ has a homoclinic orbit against the assumption.
This contradiction completes the proof of the theorem.
\end{proof}

\end{document}